\newtheorem{lemma}{Lemma}
\newtheorem{theorem}{Theorem}
\newtheorem{proposition}{Proposition}
\newtheorem{assumption}{Assumption}
\theoremstyle{definition}
\newtheorem{definition}[theorem]{Definition}
\newcommand{\RR}{\mathbb R}
\newcommand{\PP}{\mathbb P}
\newcommand{\ZZ}{\mathbb Z}
\newcommand{\f}[1]{\mathbf{#1}}
\begin{document}

\title{Approximation properties of isogeometric function spaces on singularly parameterized domains}

\author{
        Thomas Takacs \\
		 Department of Mathematics \\
            University of Pavia, Italy\\
            \tt{thomas.takacs@unipv.it}}

\date{\today}

\maketitle

\begin{abstract}
We study approximation error bounds of isogeometric function spaces on a specific type of singularly parameterized domains. In this context an isogeometric function is the composition of a piecewise rational function with the inverse of a piecewise rational geometry parameterization. We consider domains where one edge of the parameter domain is mapped onto one point in physical space. To be more precise, in our configuration the singular patch is derived from a  reparameterization of a regular triangular patch. On such a domain one can define an isogeometric function space fulfilling certain regularity criteria that guarantee optimal convergence. The main contribution of this paper is to prove approximation error bounds for the previously defined class of isogeometric discretizations. 
\end{abstract}

\section{Introduction}

Isogeometric analysis as presented by \cite{Hughes2005} starts from a discretization based on a B-spline or NURBS geometry parameterization. 
The standard theory, as developed in \cite{Bazilevs2006} relies on the regularity of the geometry mapping. If the geometry mapping is singular, the standard theory does not apply. Therefore, in the standard setting, the underlying geometry has to be diffeomorphic to a rectangle. Singular parameterizations have been studied in the context of isogeometric analysis in e.g. \cite{Lu2009,Cohen2010,Lipton2010}
The regularity theory has already been partially extended to singularly parameterized domains in \cite{Takacs2011,Takacs2012-1}. However, the study of approximation properties of isogeometric spaces over singular patches still remains an open problem, which we study in more detail in this paper.
In the context of finite elements there exist several related studies concerning degenerate $Q^1$ isoparametric elements, such as \cite{Jamet1977,Acosta2006}.

In Section \ref{sec:2} we present the notation and the underlying setting that we consider throughout this paper. In Section \ref{sec:spaces-operators} we introduce the hierarchical mesh as well as the corresponding function space on it. In Section \ref{subsec:operator} we define an $L^2$-stable projection operator onto the hierarchical function space. We briefly mention how the construction can be generalized to locally quasi-uniform refinements in Section \ref{subsec:generalization-quasi-uniform} and present approximation error bounds in Section \ref{sec:appr-estimates}. Finally, we conclude the paper in Section \ref{sec:conclusion}.

\section{Preliminaries}
\label{sec:2}

Isogeometric function spaces $\mathcal{V}$, as they are present in isogeometric analysis introduced in \cite{Hughes2005}, are built from an underlying B-spline or NURBS space. Hence, to introduce the notation needed, we start this preliminary section with recalling the notion of B-splines and NURBS. We do not give detailed definitions here but refer to standard literature for further reading, see \cite{PieglTiller1995,Prautzsch2002}.

\subsection{B-splines, NURBS and isogeometric functions}

Throughout this paper we consider the simple configuration of an isogeometric function space based on a single NURBS patch with uniform knots. Note that this simplification is not necessary, as we point out in Section \ref{subsec:generalization-quasi-uniform}.

For simplicity we consider uniform B-splines over the parameter domain $]0,1[$. Given a degree $p\in\ZZ^+$ and a mesh size $h = 1/2^n$, with $n \in \ZZ^+_0$, the $i$-th B-spline, for $i=1,\ldots,2^n+p$, is denoted by $b^n_{i}(s)$. Here $n$ refers to the level of (dyadic) refinement. We assume that the knot vector is open and has uniformly distributed interior knots. We moreover denote by $b^n_{i,j}(s,t)$ the product of $b^n_{i}(s)$ and $b^n_{j}(t)$. Let $\mathcal{S}_n$ be the tensor-product B-spline space spanned by $b^n_{i,j}(s,t)$. We have given a weight function $w \in \mathcal{S}_{n^*}$ on some coarse level $n^*$, with $w>0$ uniformly. For all $n\geq n^*$ we define the NURBS space as $\mathcal{N}_n = \{ \frac{v_n}{w} : v_n \in \mathcal{S}_n \}$. Moreover, let $\f G \in (\mathcal{N}_{n^*})^2$ be a geometry parameterization, with 
\begin{equation*}
\f G : \f B = \left]0,1\right[^2 \rightarrow \Omega \subset \RR^2.
\end{equation*}
For $n\geq n^*$, the h-dependent spaces $\mathcal{V}_n$ of \emph{isogeometric functions} over the open domain $\Omega = \f G (\f B)$ are defined by 
\begin{equation*}
	\mathcal{V}_n = \left\{ \varphi_n: \Omega \rightarrow \RR \; | \; 
	\varphi_n = f_n \circ \f G^{-1}, 
	\mbox{ with } f_n \in\mathcal{N}_n \right\}.
\end{equation*}
The goal of this paper is to prove approximation error bounds for h-refined isogeometric functions over singularly parameterized domains. In the following section we introduce a simple configuration of singularly parameterized domains.

\subsection{Singular tensor-product patches derived from triangular patches}

In this section we construct smooth isogeometric function spaces on singular patches, where the singular patches are derived from triangular B\'ezier patches. The configuration presented here is developed in more detail in \cite{Takacs2014}, which is based on \cite{Hu2001}.
 
Let $\f u$ be the mapping 
\begin{eqnarray*}
	\f u : \;\; ]0,1[^2 & \; \rightarrow \; & \Delta = \{ (u,v): 0< u < 1 , \; 0< v< u \} \\
	(s,t)^T & \; \mapsto \; & \left(s, s \, t\right)^T,
\end{eqnarray*}
and let $\f F: \Delta \rightarrow \RR^2$ be a regular mapping. We assume that the 
parameterization $\f G$ is given as
\begin{equation*}
	\f G = \f F \circ \f u.
\end{equation*}
All mappings $\f u$, $\f F$ and $\f G$ are defined on an open parameter domain and can be extended continuously to the boundary. 
Here, $\f G$ is singular at a part of the boundary, i.e. $\det \nabla \f G (s,t) = 0$ for all 
$(s,t) \in \{0\}\times [0,1]$. Let $\mathcal{W}_n$ be the isogeometric B-spline space on $\Delta$, i.e.
\begin{equation*}
	\mathcal{W}_n = \left\{ \varphi_n: \Delta \rightarrow \RR \; | \; 
	\varphi_n = f_n \circ \f u^{-1}, 
	\mbox{ with } f_n \in\mathcal{S}_n \right\},
\end{equation*}
where the inverse of the singular mapping $\f u$ is equal to 
\begin{equation*}
	\f u^{-1}(u,v) = \left(u,\frac{v}{u}\right)^T.
\end{equation*}
The various introduced mappings and domains are depicted in Figure \ref{figureMapping}. 
\begin{figure}[!ht]
    \centering
\begin{picture}(170,120)
\put(0,0){\includegraphics[width=0.5\textwidth]{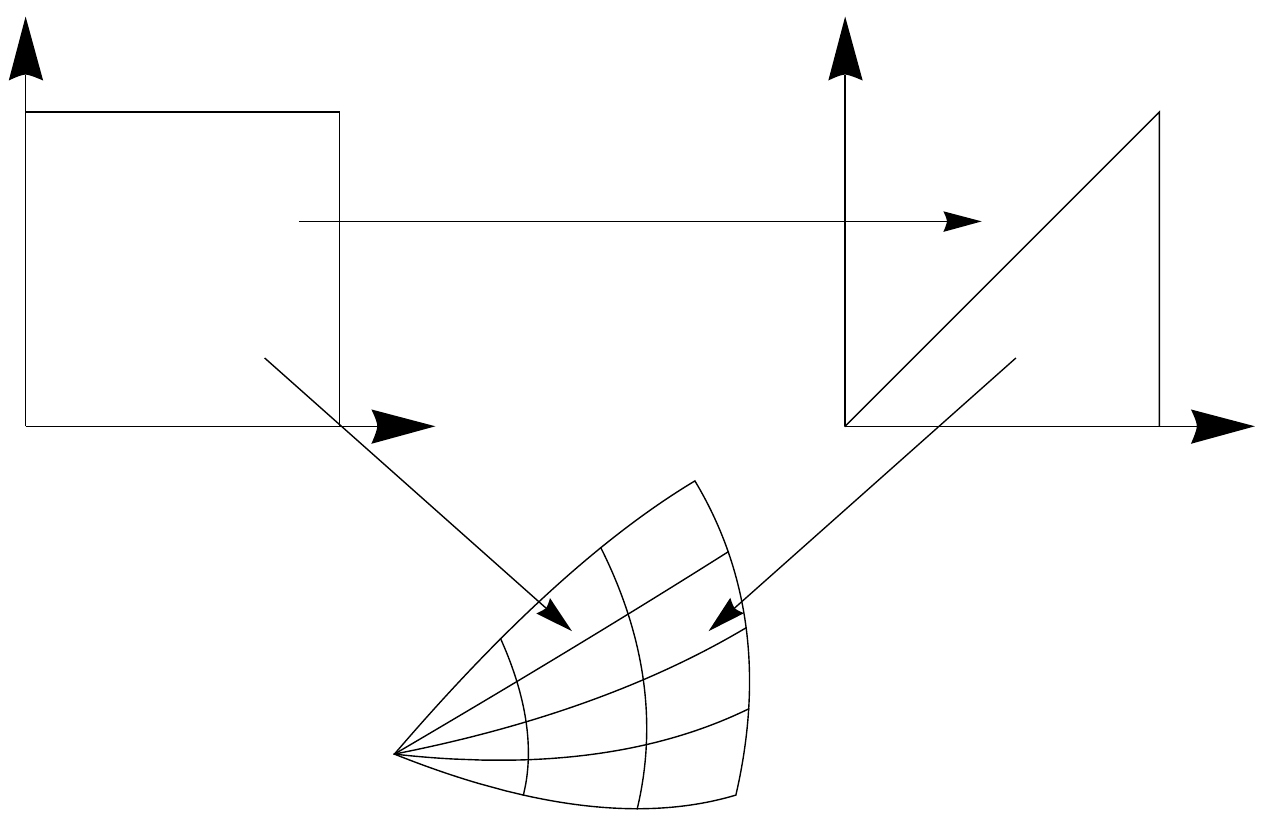}}
\put(63,50){$s$}
\put(175,50){$u$}
\put(3,112){$t$}
\put(112,112){$v$}
\put(115,33){$\f F$}
\put(44,33){$\f G$}
\put(80,85){$\f u$}
\put(105,4){$\Omega$}
\put(15,70){$]0,1[^2$}
\put(140,70){$\Delta$}
\end{picture}
    
    \caption{Mappings $\f F$, $\f u$ and $\f G$ for an example domain $\Omega$}\label{figureMapping}
\end{figure}

\subsection{Regularity conditions}

To prove approximation properties on $\Omega$, we need the following.
\begin{assumption}\label{assu:equiv-norms}
There exists a constant $C_F$, depending only on $\f F$ and on the degree $p$, such that 
\begin{equation*}
	\frac{1}{C_F} \| \varphi \|_{{H}^{k}(\Omega)} 
	\leq \| \varphi \circ \f F \|_{{H}^{k}(\Delta)} 
	\leq C_F \| \varphi \|_{{H}^{k}(\Omega)}
\end{equation*}
for all $0\leq k\leq p+1$ and for all $\varphi \in {H}^{p+1}(\Omega)$.
\end{assumption}
Considering a mapping that is continuous of order $p$ on the closure of the open domain, we have the following.
\begin{proposition}\label{prop:equiv-norms}
If $\f F \in (\mathscr{C}^{p}(\overline{\Delta}))^2$ and $\det\nabla\f F > \underline c > 0$, then Assumption \ref{assu:equiv-norms} is fulfilled.
\end{proposition}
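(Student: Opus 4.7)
The plan is to reduce the norm equivalence to two classical ingredients: a change of variables for the Lebesgue integral, and the Faà di Bruno chain rule applied to $\varphi \circ \f F$. The regularity hypothesis on $\f F$ and the strict positivity of $\det \nabla \f F$ on the compact closure $\overline{\Delta}$ will be used to make all the constants appearing in these formulas uniform.

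First, I would verify that $\f F$ is a $\mathscr{C}^{p}$-diffeomorphism from $\overline{\Delta}$ onto its image $\overline{\Omega}$. The boundedness of $\det \nabla \f F$ from below by $\underline c$ together with continuity up to the boundary provided by $\mathscr{C}^{p}(\overline{\Delta})$ is enough: the local inverse function theorem gives $\f F^{-1} \in \mathscr{C}^{p}$ locally, and a standard compactness/connectedness argument (assuming $\f F$ is injective, which is part of the setting) globalises this. All partial derivatives of $\f F$ and $\f F^{-1}$ up to the relevant order are then uniformly bounded on the respective closed domains; Cramer's rule plus the lower bound $\underline c$ controls the derivatives of $\f F^{-1}$ in terms of those of $\f F$.

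Next, I would handle the case $k=0$ by change of variables. Writing
\begin{equation*}
\int_{\Delta} |\varphi \circ \f F (y)|^{2}\,dy = \int_{\Omega} |\varphi(x)|^{2}\,|\det \nabla \f F^{-1}(x)|\,dx,
\end{equation*}
the two-sided bound $0 < c_{1} \leq |\det \nabla \f F^{-1}| \leq c_{2}$ immediately yields equivalence of $L^{2}$ norms. For $1 \leq k \leq p+1$, I would apply Faà di Bruno's formula to $\partial^{\alpha}(\varphi \circ \f F)$ for each multi-index with $|\alpha|\leq k$: each such derivative is a finite sum, whose number of terms depends only on $p$, of products of the form $(D^{\beta}\varphi)\circ\f F$ with $|\beta|\leq|\alpha|$ times monomials in partial derivatives of $\f F$ of orders at most $|\alpha|$. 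Taking absolute values, using the uniform pointwise bound on the $\f F$-derivatives, squaring, integrating, and applying the change-of-variables identity above to each $(D^{\beta}\varphi)\circ\f F$ gives the upper bound $\|\varphi \circ \f F\|_{H^{k}(\Delta)} \leq C_{F}\|\varphi\|_{H^{k}(\Omega)}$. The reverse inequality follows by symmetry: write $\varphi = (\varphi \circ \f F) \circ \f F^{-1}$ and run the same argument with $\f F^{-1}$ in place of $\f F$, using the bounds on $\f F^{-1}$ established in the first step.

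The main technical subtlety is the regularity accounting at the top order. For $k=p+1$ the Faà di Bruno expansion contains terms like $(\nabla \varphi)\circ \f F \cdot D^{p+1}\f F$, which require one more derivative of $\f F$ than the stated $\mathscr{C}^{p}$ hypothesis literally delivers; the proof as sketched goes through cleanly once one reads the hypothesis in the sense needed (i.e., $\f F$ admits bounded derivatives up to order $p+1$ on $\overline{\Delta}$, which is the setting relevant for NURBS mappings of degree $p$ whose patchwise smoothness is $\mathscr{C}^{\infty}$ and whose global regularity matches the spline smoothness class). Apart from that bookkeeping, the argument is routine: the only quantitative constants are the sup-norms of derivatives of $\f F$ and $\f F^{-1}$, bounded in terms of $\f F$ and $p$, together with the combinatorial factors from Faà di Bruno, which also depend only on $p$.
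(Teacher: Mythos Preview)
Your argument is correct and follows exactly the approach the paper indicates: the paper does not spell out a proof but refers to Lemma~3.5 in \cite{Bazilevs2006}, which is precisely the change-of-variables plus chain-rule (Fa\`a di Bruno) computation you outline, with the reverse inequality obtained by running the same estimate for $\f F^{-1}$. Your remark about the regularity bookkeeping at the top order $k=p+1$ is well taken and applies equally to the statement as formulated in the paper.
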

The proof of this proposition can be carried out similarly to the proof of Lemma 3.5 in \cite{Bazilevs2006}. 
Here, the space $\mathscr{C}^{k}$ is defined as follows.
\begin{definition}
Given an open domain $D$. The space $\mathscr{C}^{k}(\overline{D})$ of $\mathscr{C}^{k}$-continuous functions on the closure of $D$ is defined as the space 
of functions $\varphi:D\rightarrow\RR$ with $\varphi\in C^k(D)$ such that there exists a unique 
limit 
\begin{equation*}
	\lim_{\substack{\f y \rightarrow \f x\\ \f y \in D}}
	\frac{\partial^{|\alpha|} \varphi(\f y)}{\partial x_1^{\alpha_1} \partial x_2^{\alpha_2}} = 
	\frac{\partial^{|\alpha|} \varphi(\f x)}{\partial x_1^{\alpha_1} \partial x_2^{\alpha_2}}
\end{equation*}
for all $\f x \in \partial D = \overline{D} \backslash D$ and for all 
$|\alpha| = \alpha_1 + \alpha_2 \leq k$. Here $C^k(D)$ is the standard space of $k$-times continuously differentiable functions on the open domain $D$. 
\end{definition}

\section{Spaces and operators on the triangle}
\label{sec:spaces-operators}

In this section we introduce a hierarchical mesh refinement and corresponding spline spaces $\widehat{\mathcal{W}}_n$ on the triangle $\Delta$. In this configuration $\widehat{\mathcal{W}}_n$ is a subspace of $\mathcal{W}_n$. Then we introduce a projection operator  $\Pi_{\widehat{\mathcal{W}}_n} : L^2(\Delta) \rightarrow \widehat{\mathcal{W}}_n$, that is locally bounded in $L^2$. 

\subsection{Hierarchical mesh and refinement}
\label{subsec:mesh}

Let $\rm{T}_n$ be the B\'ezier mesh corresponding to the function space $\mathcal{W}_n$ on $\Delta$, i.e. it is a regular mesh mapped by $\f u$. We consider a sequence of meshes $\widehat{\rm{T}}_n$, such that $\widehat{\rm{T}}_0 = \{ \Delta \}$ and 
\begin{equation*}
	\widehat{\rm{T}}_n = \{ \delta : \delta \in \rm{T}_n \cap (\Delta \backslash \Delta_{1/2}) \} \cup \{ \delta/2 : \delta \in \widehat{\rm{T}}_{n-1}\},
\end{equation*}
where 
\begin{equation*}
	\Delta_{\gamma} = \{ (u,v): 0< u < \gamma , \; 0< v< u \}
\end{equation*}
and $\delta/2 = \{(\frac{u}{2},\frac{v}{2})\,:\, (u,v)\in\delta\}$. Here, with a slight abuse of notation, we have 
\begin{equation*}
	{\rm{T}}_n \cap (\Delta \backslash \Delta_{1/2}) = 
	\{ \delta : \delta \in {\rm{T}}_{n} \mbox{ and } \delta \subseteq \Delta \backslash \Delta_{1/2} \}.
\end{equation*}
Note that all elements in $\widehat{\rm{T}}_n$ are shape regular, i.e. the radius of the largest inscribed circle of $\delta$ is of the same order $O(h)$ as the diameter of $\delta$. Figure \ref{figure:Mesh-hat-Tn} depicts the locally refined mesh for $n=1,2,3$. The meshes are defined in such a way, that the left half of the mesh $\widehat{\rm{T}}_n$ is a scaled version of the mesh $\widehat{\rm{T}}_{n-1}$. 
\begin{figure}[!ht]
    \centering
    \includegraphics[width=0.2\textwidth]{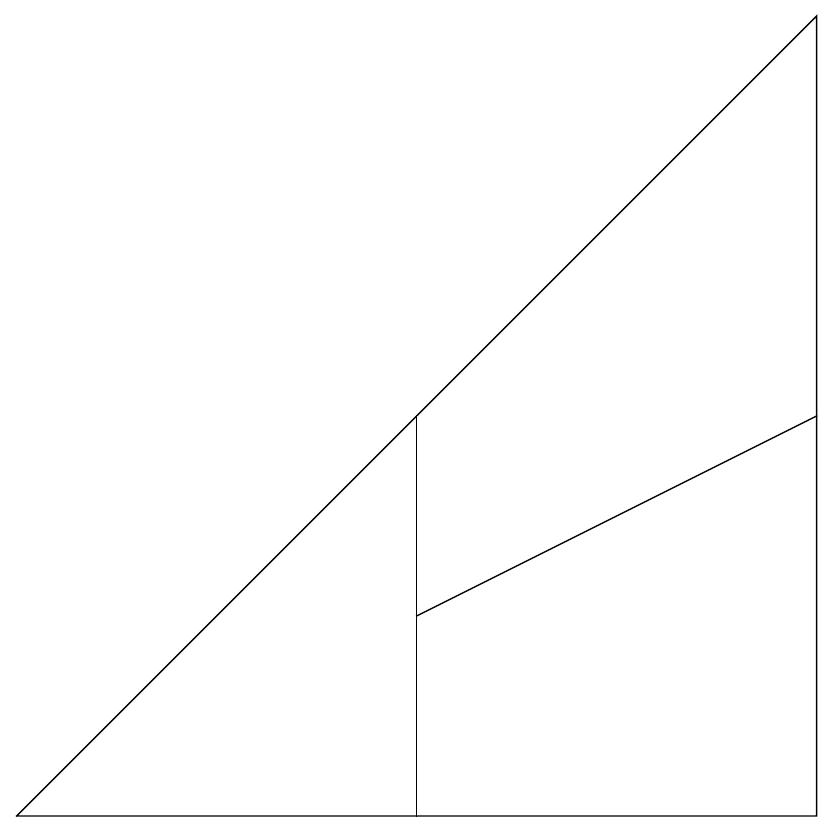}\qquad
    \includegraphics[width=0.2\textwidth]{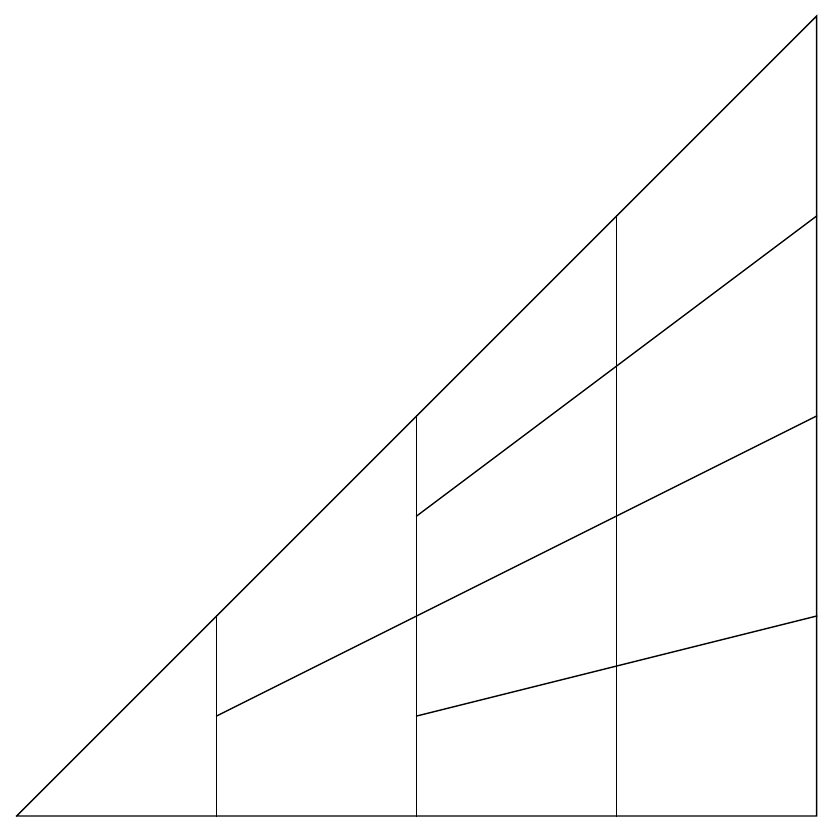}\qquad
    \includegraphics[width=0.2\textwidth]{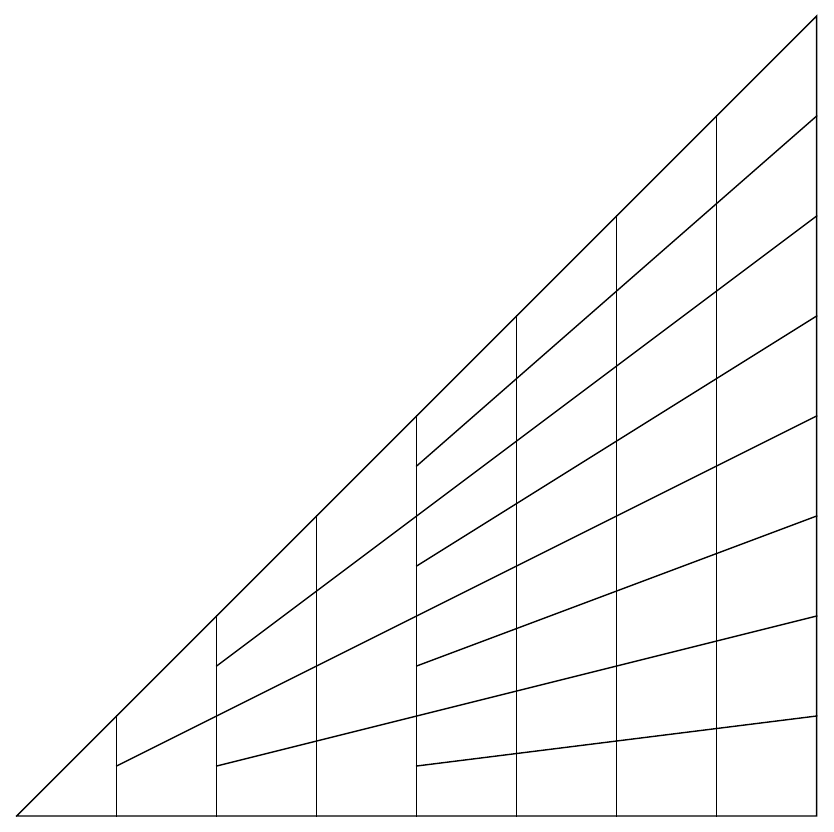}
    \caption{Meshes $\widehat{\rm{T}}_1$, $\widehat{\rm{T}}_2$ and $\widehat{\rm{T}}_3$}\label{figure:Mesh-hat-Tn}
\end{figure}
This refinement scheme is presented in more detail in \cite{Takacs2012-2}.

\subsection{Hierarchical function space}
\label{subsec:space}

Now we can define a (hierarchical) mapped B-spline space over this mesh. The following definition is based on the construction presented in \cite{Takacs2014}.
\begin{definition}
The basis $\widehat{\mathbb{S}}_n$ over $]0,1[^2$ is defined via 
\begin{equation*}
\begin{array}{rcl}
	\widehat{\mathbb{S}}_n & = & \left\{ \hat{b}^n_{i,j} = b^n_i(s) B^i_{j}(t): 1\leq i\leq p+1 \mbox{ and } 1\leq j\leq i+1 \right\} \\ 
	& \cup & \left\{ \hat{b}^n_{i,j} = b^n_i(s)b^1_j(t): p+1+1\leq i \leq p+2 \mbox{ and } 1\leq j\leq p+2 \right\} \\ 
	& \cup & \left\{ \hat{b}^n_{i,j} = b^n_i(s)b^2_j(t): p+2+1\leq i \leq p+4 \mbox{ and } 1\leq j\leq p+4 \right\}\\
	& & \ldots \\ 
	& \cup & \left\{\hat{b}^n_{i,j} =  b^n_i(s)b^n_j(t): p+2^{n-1}+1\leq i\leq p+2^{n} \mbox{ and } 1\leq j\leq p+2^{n} \right\},
\end{array}
\end{equation*}
where $B^i_{j}(t)$ is the $j$-th Bernstein polynomial of degree $i$. This defines a function space $\widehat{\mathcal{W}}_n$ over $\Delta$ via 
\begin{equation*}
	\widehat{\mathcal{W}}_n = \mbox{span}\left\{ 
	\beta^n_{i,j} = \hat{b}^n_{i,j}\circ{\f u}^{-1} \; : \; \hat{b}^n_{i,j} \in \widehat{\mathbb{S}}_n 
	\right\}.
\end{equation*}
\end{definition}
\begin{lemma}\label{lem:hatW}
The space $\widehat{\mathcal{W}}_n$ and the corresponding basis fulfill the following properties.
\begin{itemize}
\item[(A)] The space $\widehat{\mathcal{W}}_n$ is a subspace of ${\mathcal{W}}_n \cap \mathscr{C}^{p}(\overline{\Delta})$. 
\item[(B)] For all $\delta\in\widehat{\rm{T}}_n$ the space fulfills $\mathbb{P}^p \subseteq \widehat{\mathcal{W}}_n \;|_\delta \subseteq \mathbb{Q}^{p}$, where $\mathbb{P}^p$ is the space of bivariate polynomials of total degree $\leq p$ and $\mathbb{Q}^{p}$ is the space of bivariate polynomials of maximum degree $\leq p$.
\item[(C)] The basis functions fulfill
\begin{equation*}
	\beta^n_{i,j} (u,v) = \beta^{n-1}_{i,j} (2u,2v)
\end{equation*}
for all $1 \leq i \leq 2^{n-1}$ and for all $j$.
\item[(D)] The basis forms a partition of unity.
\end{itemize}
\end{lemma}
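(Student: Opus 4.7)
The plan is to verify (A)--(D) separately, leveraging the block structure of $\widehat{\mathbb{S}}_n$. Items (C) and (D) are essentially algebraic; the substance lies in (A), with (B) following by the same analysis. I would handle them in the order (D), (C), (A), (B).

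For (D), I would sum the basis over $j$ for each fixed $i$: the inner sum equals $1$ by partition of unity, either of the degree-$i$ Bernstein basis (first block) or of the univariate B-splines of $t$ (later blocks). What remains is $\sum_{i=1}^{p+2^n} b^n_i(s) = 1$, the partition of unity of the level-$n$ B-splines in $s$, which is preserved under precomposition with $\f u^{-1}$. For (C), since $\f u^{-1}(u,v) = (u, v/u)$ has its $t$-coordinate invariant under the dyadic scaling $(u,v) \mapsto (2u, 2v)$, the claimed identity for $\beta^n_{i,j}$ is equivalent to the pre-image identity $\hat{b}^n_{i,j}(s,t) = \hat{b}^{n-1}_{i,j}(2s, t)$. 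The $t$-factor is identical at the two levels by construction (the blocks align for the relevant indices), while the $s$-factor satisfies $b^n_i(s) = b^{n-1}_i(2s)$ because the local knot vector of $b^n_i$ for these low indices matches the left half of the level-$(n-1)$ knot vector up to rescaling by~$2$; this is exactly the self-similarity that is built into the hierarchical mesh of Section~\ref{subsec:mesh}.

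The main obstacle is (A). Membership in $\mathcal{W}_n$ is the easier half: the coarse-level B-splines $b^k_j(t)$ with $k<n$ expand in the fine basis via knot insertion, and in the first block the Bernstein factors combine with the support-localised $b^n_i(s)$ to remain in $\mathcal{S}_n$. The $\mathscr{C}^p(\overline{\Delta})$-regularity is the delicate content. I would argue in the pre-image using the criterion established in \cite{Takacs2014}: a function $f \in \mathcal{S}_n$ has $f \circ \f u^{-1}$ extending $\mathscr{C}^p$-continuously to the singular vertex if and only if, for every $0 \le k \le p$, the Taylor coefficient $\partial_s^k f(0,t)$ is a polynomial in $t$ of degree at most $k$. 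The block structure of $\widehat{\mathbb{S}}_n$ is tailored to this criterion: in the first block the interaction between the vanishing orders of $b^n_i(s)$ at $s=0$ and the degrees of the paired $B^i_j(t)$ produces Taylor coefficients of matching degree, and in every later block $b^n_i(s)$ vanishes to order greater than $p$ at $s=0$, so the criterion is trivially met.

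For (B), the upper inclusion $\widehat{\mathcal{W}}_n|_\delta \subseteq \mathbb{Q}^p$ on each $\delta \in \widehat{\rm{T}}_n$ follows from the same cancellations: on elements disjoint from the singular vertex each $\beta^n_{i,j}$ is already a tensor-product polynomial of max-degree $\le p$, while on elements touching the vertex the Taylor cancellations of (A) convert the apparent rational factor $B^i_j(v/u)$ into a genuine polynomial in $(u,v)$ of max-degree $\le p$. The lower inclusion $\mathbb{P}^p \subseteq \widehat{\mathcal{W}}_n|_\delta$ is polynomial reproducibility of total degree $p$: near the vertex by the triangular Bernstein array of the first block (after composition with $\f u^{-1}$), and on interior elements by the standard tensor-product local polynomial reproduction property of the degree-$p$ B-spline basis. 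In short, once the pre-image smoothness bookkeeping of (A) is in hand, the rest falls into place.
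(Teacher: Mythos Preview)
The paper does not prove this lemma at all; it explicitly states ``We omit the simple proof of this lemma.'' There is therefore no argument to compare against, and your proposal amounts to supplying what the paper leaves out.

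Your outline is sound and targets exactly the mechanisms the construction is designed around: blockwise partition of unity for (D), the dyadic self-similarity of $\f u^{-1}$ together with the knot-rescaling identity $b^n_i(s)=b^{n-1}_i(2s)$ for (C), and the Taylor-coefficient criterion at $s=0$ from \cite{Takacs2014} for the $\mathscr{C}^p$ part of (A). The reduction of (B) to the same bookkeeping---triangular Bernstein structure near the vertex, standard tensor-product reproduction away from it---is the right idea.

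One caution on the details of (A)/(B): your phrase ``Taylor coefficients of matching degree'' hides an index check that deserves to be written out. With the indices exactly as printed in the paper (Bernstein factor $B^i_j$ of degree $i$, $1\le j\le i+1$, paired with $b^n_i$ vanishing to order $i-1$ at $s=0$), the degree count is off by one and the basis function $\beta^n_{1,2}(u,v)=b^n_1(u)\,v/u$ would not even extend continuously to the vertex; the first block also then has $\tfrac{(p+1)(p+4)}{2}$ functions rather than the $\binom{p+2}{2}$ of a degree-$p$ triangular B\'ezier patch. The intended construction from \cite{Takacs2014} pairs $b^n_i$ with Bernstein polynomials of degree $i-1$ (so $1\le j\le i$), and with that convention your matching argument goes through verbatim. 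Make this explicit when you write up the proof.
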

We omit the simple proof of this lemma.

\subsection{Stable projection operator}
\label{subsec:operator}

Let $n_0 = \left\lceil \log_2(8p) \right\rceil$. Then we have ${h_0} = 1/2^{n_0}$. In the following we consider $n\geq n_0$. This is necessary for the proofs of Lemma \ref{lem:regular-part} and Theorem \ref{thm:L2-stability}, to be able to properly split the contributions from the singular point at the left and the regular part at the right of the parameter domain. 
We consider the standard dual basis for univariate B-splines, see e.g. \cite{Schumaker2007,beirao2014actanumerica} for more details.
\begin{definition}\label{standard-dual-basis}
Let $\lambda^{n}_k : L^2(0,1)\rightarrow\RR$ be the dual basis for $b^n_i$ with $\lambda^{n}_k (b^n_i) = \delta^{k}_{i}$, for $i,k\in \{1,\ldots,2^n+p\}$.
\end{definition}
Moreover, consider a dual basis for polynomials on $\Delta_{h_0}$.
\begin{definition}\label{n0-dual-basis}
Let $\mu^{n_0}_{k,l} : L^2(\Delta_{h_0})\rightarrow\RR$ be dual functionals, with 
\begin{equation*}
	\mu^{n_0}_{k,l}( \beta^{n_0}_{i,j} ) = \delta^{k}_{i}\delta^{l}_{j},
\end{equation*}
for $k=1,\ldots,p+1$ and $l=1,\ldots,k$. 
\end{definition}
We assume that all the functionals in Definitions \ref{standard-dual-basis} and \ref{n0-dual-basis} are bounded in $L^2$. This is no restriction, see 
\cite{Schumaker2007}.

Let $c\f{I}$ be the mapping from $\RR^2 \rightarrow \RR^2$ with $(u,v)\mapsto (c\,u,c\,v)$.
Then we can define a global dual basis for all $n>n_0$. 
\begin{definition}
Let $n>n_0$. Given $\varphi \in L^2(\Delta)$, we set 
\begin{equation*}
	\Lambda^{n}_{k,l} (\varphi) = \mu^{n_0}_{k,l} (\varphi \circ (2^{n_0-n})\f{I})
\end{equation*}
for all $k=1,\ldots,p+1$ and all $l=1,\ldots,k$. Moreover, we set 
\begin{equation*}
	\Lambda^{n}_{k,l} (\varphi) = \lambda^{n}_{k}\otimes\lambda^{m(k)}_l(\varphi \circ \f u)
\end{equation*}
for all $k=p+2,\ldots,2^n+p$ and all $l=1,\ldots,2^{m(k)}+p$. Here $m(k) = \lceil \log_2(k-p+1) \rceil$. 
\end{definition}
It is easy to see, that $\Lambda^{n}_{k,l}$ is in fact a dual basis for the basis $\beta^{n}_{i,j}$ on $\Delta$.
 
\begin{definition}
The dual functionals naturally define a projection operator from $L^2(\Delta)$ onto $\widehat{\mathcal{W}}_n$, with
\begin{equation*}
	\Pi_{\widehat{\mathcal{W}}_n} (\varphi) = \sum^{2^n+p}_{k=1}\sum_{l} \Lambda^{n}_{k,l}(\varphi) \beta^{n}_{k,l},
\end{equation*}
where we appropriately sum over all $l$.
\end{definition}
Before we prove our main Theorem \ref{thm:L2-stability}, we recall two preliminary results.
\begin{lemma}\label{lem:regular-part}
Let $n\geq n_0$. There exists a constant $C_R>0$, depending only on $p$, such that for all $\varphi \in L^2(\Delta\backslash\Delta_{h_0})$ and for all $\delta \in \widehat{\rm{T}}_n \cap (\Delta \backslash \Delta_{3/8})$ we have 
\begin{equation}
	\left\| \Pi_{\widehat{\mathcal{W}}_n} \varphi \right\|_{L^2(\delta)} \leq C_R \left\| \varphi \right\|_{L^2(\tilde\delta)},
\end{equation}
where $\tilde\delta$ is the support extension of $\delta$ as presented in \cite{Bazilevs2006}. 
\end{lemma}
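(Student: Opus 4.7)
My plan is to localize to the regular part of the hierarchical mesh, pull everything back to $(s,t)$ parameter space via $\f u$, apply a standard tensor-product B-spline quasi-interpolation estimate there, and finally transfer the estimate back using that $|\det\nabla\f u|=s$ is bounded from below on the regular region.

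First, I would verify that for $n\geq n_0$ and $\delta\in\widehat{\rm T}_n\cap(\Delta\backslash\Delta_{3/8})$, only the ``regular'' basis functions $\beta^n_{k,l}$ with $k\geq p+2$ can have support intersecting $\delta$. The ``singular'' basis functions (those with $k\leq p+1$, whose dual functionals $\Lambda^n_{k,l}$ are inherited from $\mu^{n_0}_{k,l}$ through the rescaling $(2^{n_0-n})\f I$) are supported inside $\Delta_{h_0}$ by construction and property (C) of Lemma \ref{lem:hatW}, while $h_0=1/2^{n_0}\leq 1/(8p)$ implies $\Delta_{h_0}$ is well separated from $\Delta\backslash\Delta_{3/8}$. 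In the same step I would show that the support extension $\tilde\delta$ satisfies $\tilde\delta\subseteq\Delta\backslash\Delta_{1/4}$, since $\tilde\delta$ extends $\delta$ by at most $p$ mesh elements in each direction, hence by at most $ph\leq ph_0\leq 1/8$ in the $u$-direction. In particular $\tilde\delta\subseteq\Delta\backslash\Delta_{h_0}$, so that $\varphi$ is defined on $\tilde\delta$.

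Next, for any regular index $k\geq p+2$ we have by definition $\beta^n_{k,l}=(b^n_k(s)\,b^{m(k)}_l(t))\circ\f u^{-1}$ and $\Lambda^n_{k,l}(\varphi)=(\lambda^n_k\otimes\lambda^{m(k)}_l)(\varphi\circ\f u)$. Setting $\psi=\varphi\circ\f u$ and $\hat\delta=\f u^{-1}(\delta)$, the restriction of $(\Pi_{\widehat{\mathcal W}_n}\varphi)\circ\f u$ to $\hat\delta$ becomes a finite sum of tensor-product B-spline terms. A key observation is that only $O(1)$ distinct values of $m(k)$ occur (concretely $m(k)\in\{n-1,n\}$), because regular indices whose $s$-supports reach into $\Delta\backslash\Delta_{3/8}$ correspond to the two finest refinement blocks in $s$. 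Within each block the sum has the shape of a classical uniform tensor-product B-spline quasi-interpolant at mesh size $\sim h$, and the $L^2$-boundedness of $\lambda^n_k$ and $\lambda^{m(k)}_l$ assumed in Definition \ref{standard-dual-basis} yields the local bound
\begin{equation*}
\bigl\|(\Pi_{\widehat{\mathcal W}_n}\varphi)\circ\f u\bigr\|_{L^2(\hat\delta)}\leq C\,\|\psi\|_{L^2(\hat{\tilde\delta})},
\end{equation*}
with $\hat{\tilde\delta}=\f u^{-1}(\tilde\delta)$ and $C$ depending only on $p$.

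Finally, I would transfer this back via a change of variables. On $\hat{\tilde\delta}$ the Jacobian $|\det\nabla\f u(s,t)|=s$ is bounded above by $1$ and below by $1/4$ by the localization in Step~1, hence $\|\varphi\|_{L^2(\delta)}\asymp\|\psi\|_{L^2(\hat\delta)}$ and $\|\psi\|_{L^2(\hat{\tilde\delta})}\asymp\|\varphi\|_{L^2(\tilde\delta)}$ with absolute constants. Chaining the three estimates produces the lemma with $C_R$ depending only on $p$. The main obstacle is the bookkeeping near the interface $u\approx 1/2$, where two refinement blocks with different levels $m(k)\in\{n-1,n\}$ in the $t$-direction coexist on the same element; one must verify that each block individually satisfies the tensor-product stability bound and that the total number of basis functions contributing to any element is uniformly bounded in terms of $p$, which follows from the shape-regularity of $\widehat{\rm T}_n$ and the standard support structure of uniform B-splines.
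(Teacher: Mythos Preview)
Your proposal is correct and follows essentially the same approach as the paper: restrict to the region where $\f u$ is regular, verify that the support extension $\tilde\delta$ stays in $\Delta\backslash\Delta_{h_0}$ (the paper obtains $\tilde\delta\subset\Delta\backslash\Delta_{3/8-ph}$ and uses $3/8-ph\geq h_0$; you obtain the slightly sharper $\tilde\delta\subset\Delta\backslash\Delta_{1/4}$), and then invoke the standard tensor-product quasi-interpolation stability on the regularly parameterized piece, with the constant controlled by the lower bound on $\det\nabla\f u$. The paper simply cites the standard theory; you spell out the pullback, the exclusion of the singular basis functions with $k\leq p+1$, and the bookkeeping at the $m(k)\in\{n-1,n\}$ interface, but the underlying argument is the same.
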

\begin{proof}
A proof of this lemma follows directly from the standard theory for regularly parameterized domains (see e.g. \cite{Bazilevs2006,beirao2014actanumerica}), since $\Delta\backslash\Delta_{h_0}$ is parameterized via $\f u_0: \left] h_0,1 \right[ \times \left] 0,1 \right[ \rightarrow \Delta\backslash\Delta_{h_0}$ and the support extension of any $\delta \in \Delta \backslash \Delta_{3/8}$ remains in $\Delta\backslash\Delta_{(3/8 - ph)}$. Since $n_0$ is chosen large enough, we have $\frac{3}{8}-ph \geq h_0$ for all $h\leq h_0$, and therefore $\tilde\delta \subset \Delta\backslash\Delta_{h_0}$. In that case, the constant $C_R$ depends on the degree $p$ and on the Jacobian determinant of the mapping $\f u_0$, which depends on $n_0$ only. Since $n_0$ is fully determined by $p$, the constant $C_R$ depends on $p$ only.
\end{proof}
On the refinement level $n_0$ it is obvious that the operator is bounded.
\begin{lemma}\label{lem:initial-level}
There exists a constant $C_0>0$, depending only on $p$, such that for all $\varphi \in L^2(\Delta)$ and for all $\delta \in \widehat{\rm{T}}_{n_0}$ we have 
\begin{equation}
	\left\| \Pi_{\widehat{\mathcal{W}}_{n_0}} \varphi \right\|_{L^2(\delta)} \leq C_0 \left\| \varphi \right\|_{L^2(\tilde\delta)}.
\end{equation}
\end{lemma}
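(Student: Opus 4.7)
The plan is to exploit the fact that $n_0$ is fully determined by $p$, so the mesh $\widehat{\rm{T}}_{n_0}$, the space $\widehat{\mathcal{W}}_{n_0}$, and the collection of dual functionals $\{\Lambda^{n_0}_{k,l}\}$ all have complexity bounded solely in terms of $p$. The lemma therefore reduces to a finite-dimensional boundedness argument once locality of supports is used to restrict the sum of contributions to a neighborhood of $\delta$.

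First, I fix $\delta \in \widehat{\rm{T}}_{n_0}$ and write
\begin{equation*}
    \Pi_{\widehat{\mathcal{W}}_{n_0}} \varphi \big|_\delta \;=\; \sum_{(k,l)\in I_\delta} \Lambda^{n_0}_{k,l}(\varphi)\, \beta^{n_0}_{k,l}\big|_\delta,
\end{equation*}
where $I_\delta$ collects those indices whose basis function does not vanish on $\delta$. By local support of the B-spline / Bernstein factors forming $\widehat{\mathbb S}_{n_0}$, the cardinality $|I_\delta|$ is bounded by a constant that depends only on $p$. Next, I define the support extension $\tilde\delta$ as the union over $(k,l)\in I_\delta$ of the supports of the dual functionals $\Lambda^{n_0}_{k,l}$, which is consistent with the usage in Lemma \ref{lem:regular-part} and the standard construction in \cite{Bazilevs2006}.

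The second step is to estimate each term. By the assumed $L^2$-boundedness of the functionals $\mu^{n_0}_{k,l}$ and $\lambda^{n_0}_k$ appearing in Definitions \ref{standard-dual-basis} and \ref{n0-dual-basis}, and using the change of variables $\f u$ (respectively the scaling $(2^{n_0-n})\f{I}$, which is the identity at level $n_0$), we obtain
\begin{equation*}
    |\Lambda^{n_0}_{k,l}(\varphi)| \;\leq\; C_\Lambda\, \|\varphi\|_{L^2(\mathrm{supp}\,\Lambda^{n_0}_{k,l})} \;\leq\; C_\Lambda\, \|\varphi\|_{L^2(\tilde\delta)},
\end{equation*}
with $C_\Lambda$ depending only on $p$ (the Jacobian of $\f u$ on the regular portion is bounded below by a $p$-dependent constant because $\delta$ sits at the fixed level $n_0$, and on the singular portion $\Delta_{h_0}$ no composition with $\f u$ is used). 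The basis functions themselves satisfy $\|\beta^{n_0}_{k,l}\|_{L^2(\delta)} \leq C_\beta$ with $C_\beta$ depending only on $p$, since they are pullbacks of fixed piecewise polynomials on the unit square and $|\delta|\le 1$.

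Combining the two estimates with $|I_\delta|\le C(p)$ yields
\begin{equation*}
    \|\Pi_{\widehat{\mathcal{W}}_{n_0}} \varphi\|_{L^2(\delta)} \;\leq\; |I_\delta|\, C_\Lambda C_\beta\, \|\varphi\|_{L^2(\tilde\delta)} \;=\; C_0\, \|\varphi\|_{L^2(\tilde\delta)},
\end{equation*}
and $C_0$ depends only on $p$, as required. The only real subtlety I anticipate is bookkeeping the two distinct kinds of dual functionals near the singular triangle $\Delta_{h_0}$ versus in the regular part, and verifying that in both cases the functional's support lies within the chosen $\tilde\delta$; this is straightforward because at the fixed level $n_0$ the entire construction lives on a mesh whose combinatorics depends only on $p$.
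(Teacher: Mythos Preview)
Your proposal is correct and follows essentially the same approach as the paper: the paper's own proof simply notes that the bound follows from the $L^2$-boundedness of the dual functionals at the fixed coarse level $n_0$, and that since $n_0$ is determined by $p$, the resulting constant depends only on $p$. You have spelled out exactly this argument in detail (locality of supports, finite index set $I_\delta$, bounded functionals and basis functions), so there is no substantive difference.
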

\begin{proof}
This lemma follows directly from the boundedness of the dual functionals. There is no condition on the scaling needed, since we consider the coarse level $n_0$ only. The constant $C_0$ depends on $p$ and $n_0$ only. Again, $n_0$ is fully determined by $p$, which concludes the proof.
\end{proof}

Now we can prove the main result, the stability of the projection operator.
\begin{theorem}\label{thm:L2-stability}
Let $n\geq n_0$. There exists a constant $C_S>0$, depending only on $p$, such that for all $\varphi \in L^2(\Delta)$ and for all $\delta \in \widehat{\rm{T}}_n$ we have 
\begin{equation}
	\left\| \Pi_{\widehat{\mathcal{W}}_n} \varphi \right\|_{L^2(\delta)} \leq C_S \left\| \varphi \right\|_{L^2(\tilde\delta)}.
\end{equation}
\end{theorem}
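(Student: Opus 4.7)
The plan is to prove Theorem \ref{thm:L2-stability} by induction on the refinement level $n \geq n_0$. The base case $n = n_0$ is precisely Lemma \ref{lem:initial-level}, so I would initialise the inductive constant with $C_S \geq C_0$.

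For the inductive step, I would decompose the elements $\delta \in \widehat{\rm{T}}_n$ using the threshold $3/8$ already appearing in Lemma \ref{lem:regular-part}. If $\delta \in \widehat{\rm{T}}_n \cap (\Delta \setminus \Delta_{3/8})$, Lemma \ref{lem:regular-part} immediately yields the bound with constant $C_R$. If instead $\delta \subset \Delta_{3/8}$, then since $\Delta_{3/8} \subset \Delta_{1/2}$, the recursive definition of $\widehat{\rm{T}}_n$ forces $\delta$ to lie in the self-similar half, so $\delta = \delta'/2$ for some $\delta' \in \widehat{\rm{T}}_{n-1}$. The strategy here is to reduce the level-$n$ problem to the level-$(n{-}1)$ problem via the scaling $(u,v) \mapsto (2u, 2v)$ and invoke the inductive hypothesis.

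The technical heart of the argument is the pointwise identity
\begin{equation*}
\Pi_{\widehat{\mathcal{W}}_n} \varphi(u, v) = \Pi_{\widehat{\mathcal{W}}_{n-1}} \tilde\varphi(2u, 2v), \qquad (u,v) \in \delta \subset \Delta_{3/8},
\end{equation*}
where $\tilde\varphi(u', v') := \varphi(u'/2, v'/2)$. This identity rests on two ingredients. First, the self-similarity property (C) of Lemma \ref{lem:hatW}, which gives $\beta^n_{k,l}(u,v) = \beta^{n-1}_{k,l}(2u, 2v)$ for $k \leq 2^{n-1}$. Second, a matching identity for the dual functionals, $\Lambda^n_{k,l}(\varphi) = \Lambda^{n-1}_{k,l}(\tilde\varphi)$: this is immediate for the singular-point functionals from the $2^{n_0 - n}\f I$ scaling built into their definition, and for the tensor-product B-spline functionals it follows from the B-spline scaling $b^n_k(s) = b^{n-1}_k(2s)$ together with the direct calculation $\tilde\varphi \circ \f u (s', t) = (\varphi \circ \f u)(s'/2, t)$. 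The choice $n_0 = \lceil \log_2(8p) \rceil$ enters essentially here: it guarantees that the $O(ph)$ boundary layer of B-splines straddling $s = 1/2$ is disjoint from $\Delta_{3/8}$, so no basis function outside the scope of property (C) can contribute on $\delta$.

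Once the pointwise identity is in hand, a change of variables gives $\| \Pi_{\widehat{\mathcal{W}}_n} \varphi \|_{L^2(\delta)} = \tfrac{1}{2} \| \Pi_{\widehat{\mathcal{W}}_{n-1}} \tilde\varphi \|_{L^2(\delta')}$; applying the inductive hypothesis on $\delta' \in \widehat{\rm{T}}_{n-1}$ bounds this by $\tfrac{1}{2} C_S \| \tilde\varphi \|_{L^2(\tilde\delta')}$, and the inverse scaling $\| \tilde\varphi \|_{L^2(\tilde\delta')} = 2 \| \varphi \|_{L^2(\tilde\delta'/2)}$ together with $\tilde\delta'/2 \subset \tilde\delta$ yields $C_S \| \varphi \|_{L^2(\tilde\delta)}$ with no growth in the constant. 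Setting $C_S = \max(C_R, C_0)$ then closes the induction. The main obstacle I expect is exactly the rigorous verification of the pointwise identity on $\delta \subset \Delta_{3/8}$: tracking the dual functionals through the $(u,v) \mapsto (2u,2v)$ scaling, and confirming via the choice of $n_0$ that only basis functions in the self-similar range of property (C) can enter the local representation of $\Pi_{\widehat{\mathcal{W}}_n}\varphi$ there.
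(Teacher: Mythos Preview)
Your proposal is correct and follows essentially the same approach as the paper: both rely on the self-similarity identity $\widehat{\mathcal{W}}_n|_{\Delta_{3/8}} = \widehat{\mathcal{W}}_{n-1}|_{\Delta_{3/4}}\circ 2\f I$ together with Lemmas~\ref{lem:regular-part} and~\ref{lem:initial-level}, and both arrive at $C_S=\max(C_R,C_0)$. The only difference is organizational: the paper scales each $\delta\subset\Delta_{3/8}$ directly by the power $2^m$ that lands it either in $\Delta_{3/4}\setminus\Delta_{3/8}$ (then invokes Lemma~\ref{lem:regular-part} at level $n-m$) or, for the innermost elements, all the way back to level $n_0$ (then invokes Lemma~\ref{lem:initial-level}), whereas your induction applies one scaling step at a time---unrolling your induction recovers exactly the paper's three-case split.
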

\begin{proof}
We split the proof into three parts.
\begin{itemize}
\item[(A)] Let $\delta \subset \Delta \backslash \Delta_{3/8}$. Then this statement follows directly from Lemma \ref{lem:regular-part}.
\item[(B)] Let $\delta \subset \Delta_{3/8}$ and there exists an $m \in \mathbb{Z}^+$, with $m\leq n-n_0$, such that $2^m\f I (\delta) \subset \Delta_{3/4} \backslash \Delta_{3/8}$. Due to Lemma \ref{lem:hatW} (C) we have 
$$\widehat{\mathcal{W}}_n |_{\Delta_{3/8}} = \widehat{\mathcal{W}}_{n-1} |_{\Delta_{3/4}} \circ 2\f I$$
for all $n \geq  \log_2(8 p)$. Hence we conclude 
\begin{equation*}
	\left\| \Pi_{\widehat{\mathcal{W}}_n} \varphi \right\|^2_{L^2(\delta)} 
	= 2^{-m}\left\| \Pi_{\widehat{\mathcal{W}}_{n-m}} (\varphi\circ (2^{-m}\f I)) \right\|^2_{L^2(2^m\f I (\delta))}.
\end{equation*}
The desired result follows from Lemma \ref{lem:regular-part} and because 
\begin{equation*}
	2^{-m} \left\|\varphi\circ ((2^{-m})\f I) \right\|^2_{L^2(\widetilde{2^m\f I (\delta)})} = \left\| \varphi \right\|^2_{L^2(\tilde\delta)}.
\end{equation*}
\item[(C)] We have 
\begin{equation*}
	\left\| \Pi_{\widehat{\mathcal{W}}_n} \varphi \right\|^2_{L^2(\delta)} 
	= 2^{n_0-n}\left\| \Pi_{\widehat{\mathcal{W}}_{n_0}} (\varphi\circ (2^{n_0-n}\f I)) \right\|^2_{L^2(2^{n-n_0}\f I (\delta))}.
\end{equation*}
Here, the bound is fulfilled because of Lemma \ref{lem:initial-level}. 
\end{itemize}
This concludes the proof with $C_S = \max(C_R,C_0)$.
\end{proof}
Having a stable projection operator, we can prove approximation error bounds on $\Omega$ for a special class of geometry mappings $\f G$. But before we present the proof, we discuss a possible generalization. 

\subsection{Generalization to locally quasi-uniform knot vectors}
\label{subsec:generalization-quasi-uniform}

The results can be generalized to certain configurations of non-uniform refinements. 
Let ${\rm{T}}_n$ be a sequence of locally quasi-uniform meshes with ${\rm{T}}_0 = \{ \Delta \}$. 
In that case one can also show that there exists a bounded operator, projecting onto the piecewise polynomial function space over the locally quasi-uniform mesh. The stability constant then depends on the degree $p$ and on the quasi-uniformity constants, i.e. the maximum ratio between the diameter of an element and the diameter of the inscribed circle, as well as the maximum ratio between the diameters of neighboring elements. Figure \ref{fig:qu-meshes} depicts a sequence of three locally quasi-uniform meshes.
\begin{figure}[!ht]
    \centering
    \includegraphics[width=0.2\textwidth]{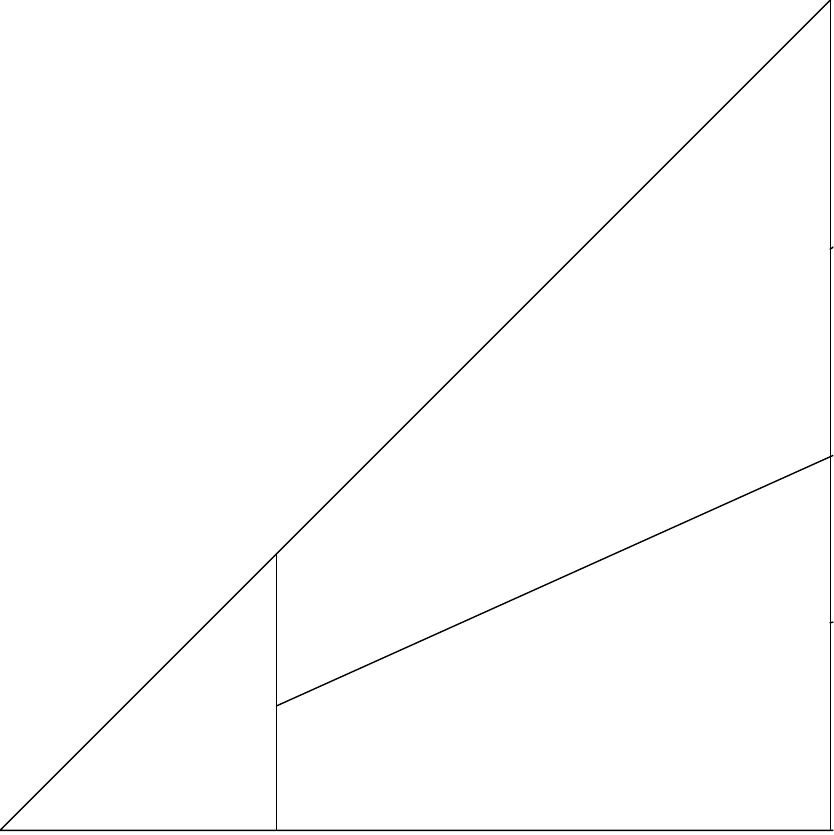}\qquad
    \includegraphics[width=0.2\textwidth]{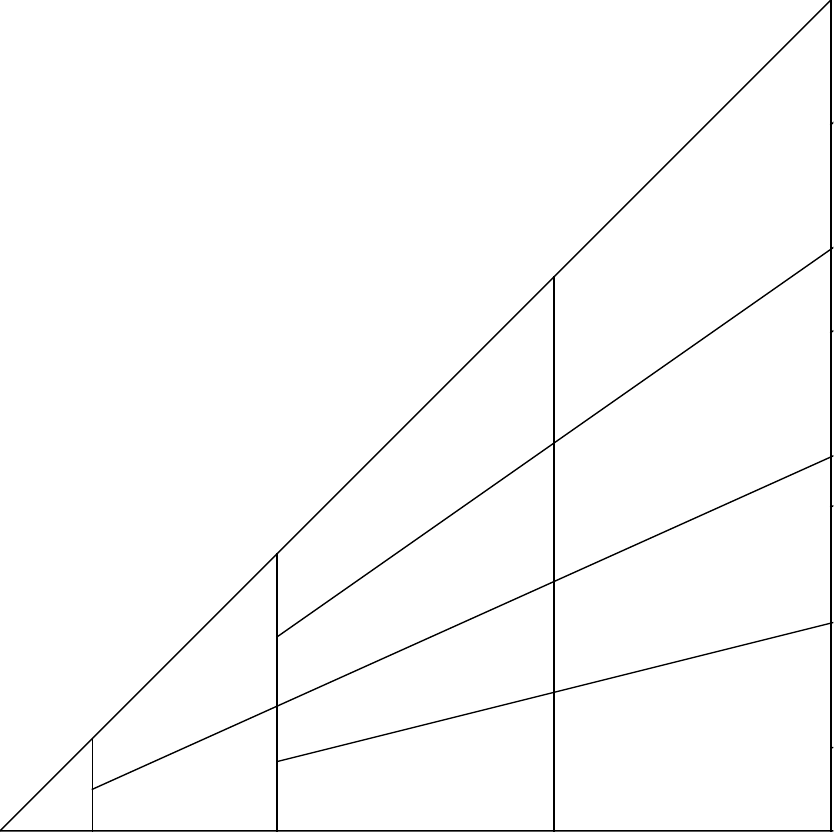}\qquad
    \includegraphics[width=0.2\textwidth]{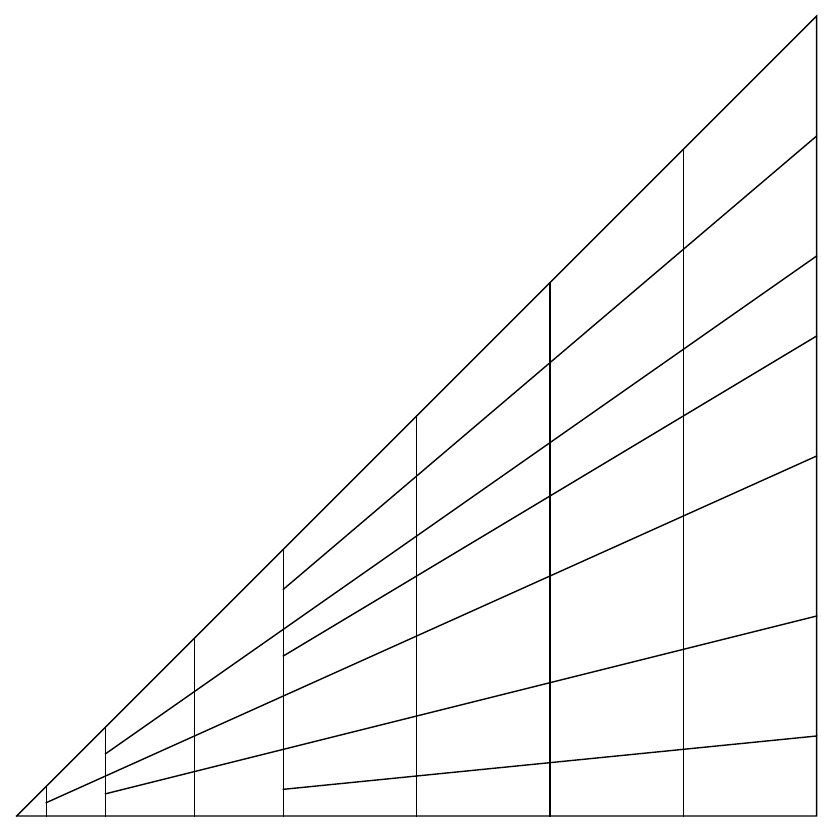}
    \caption{Meshes $\widehat{\rm{T}}_1$, $\widehat{\rm{T}}_2$ and $\widehat{\rm{T}}_3$ for a locally quasi-uniform refinement}\label{fig:qu-meshes}
\end{figure}

This generalization is based on the fact, that Lemma \ref{lem:regular-part} is actually valid for any mesh on $\Delta \backslash \Delta_{3/8}$ (see \cite{beirao2014actanumerica}). Moreover, Lemma \ref{lem:initial-level} can be generalized to arbitrary initial meshes. Then the constant depends only on the degree and on the quasi-uniformity of the initial mesh. We do not go into the details of this generalization here, but continue with the proof of approximation properties in the uniform case.

\section{Approximation error bounds}
\label{sec:appr-estimates}

We first prove approximation error bounds on the triangle $\Delta$ and then show bounds on $\Omega$, using the equivalence of norms presented in Assumption \ref{assu:equiv-norms}.

\subsection{Approximation on $\Delta$}

On $\Delta$ we have the following approximation error bound.
\begin{theorem}\label{thm:approx-delta}
Let $n \geq n_0$. Then there exists a constant $C>0$, depending only on the degree $p$, such that for all $\varphi \in \mathcal{H}^{p+1}(\Delta)$, for all $q\leq p+1$, and for all $\delta \in \widehat{T}_n$ we have 
\begin{equation*}
	\left| \varphi - \Pi_{\widehat{\mathcal{W}}_n} \varphi \right|_{H^q(\delta)} \leq C h^{p-q+1} |\varphi |_{\mathcal{H}^{p+1}(\tilde\delta)}.
\end{equation*}
\end{theorem}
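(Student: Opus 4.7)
The strategy is a local Bramble--Hilbert argument fed by the $L^2$-stability of Theorem~\ref{thm:L2-stability}. Fix $\delta\in\widehat{T}_n$ with support extension $\tilde\delta$. The operator $\Pi_{\widehat{\mathcal{W}}_n}$ is local on $\delta$ because the dual functionals $\Lambda^n_{k,l}$ associated with basis functions whose support meets $\delta$ act on data in $\tilde\delta$. Combined with Lemma~\ref{lem:hatW}~(B), which supplies $\mathbb{P}^p \subseteq \widehat{\mathcal{W}}_n|_{\delta'}$ elementwise on $\tilde\delta$, this yields local polynomial reproduction: for any $\pi\in\mathbb{P}^p$ one can assemble $g\in\widehat{\mathcal{W}}_n$ with $g=\pi$ on $\tilde\delta$ (the $\mathscr{C}^p$ regularity from Lemma~\ref{lem:hatW}~(A) is automatically compatible with the smooth $\pi$), so that $\Pi_{\widehat{\mathcal{W}}_n}\pi|_\delta=\pi|_\delta$. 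Hence
\[
	\varphi - \Pi_{\widehat{\mathcal{W}}_n}\varphi = (\varphi-\pi) - \Pi_{\widehat{\mathcal{W}}_n}(\varphi-\pi)\qquad\text{on }\delta,
\]
and it suffices to bound each right-hand term.

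I would take $\pi$ as the averaged Taylor polynomial of $\varphi$ on $\tilde\delta$. Since every element of $\widehat{T}_n$ is shape-regular of diameter $\sim h=2^{-n}$ and $\tilde\delta$ is a union of $O(1)$ such elements, the Deny--Lions / Dupont--Scott theory yields
\[
	\|\varphi-\pi\|_{L^2(\tilde\delta)}\leq Ch^{p+1}|\varphi|_{\mathcal{H}^{p+1}(\tilde\delta)},\qquad |\varphi-\pi|_{H^q(\tilde\delta)}\leq Ch^{p+1-q}|\varphi|_{\mathcal{H}^{p+1}(\tilde\delta)}.
\]
The first term in the decomposition is then controlled by $|\varphi-\pi|_{H^q(\delta)}\leq|\varphi-\pi|_{H^q(\tilde\delta)}$. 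For the second, Lemma~\ref{lem:hatW}~(B) tells us that $\Pi_{\widehat{\mathcal{W}}_n}(\varphi-\pi)|_\delta\in\mathbb{Q}^p$, so the standard polynomial inverse inequality on the shape-regular element $\delta$ gives
\[
	|\Pi_{\widehat{\mathcal{W}}_n}(\varphi-\pi)|_{H^q(\delta)}\leq Ch^{-q}\|\Pi_{\widehat{\mathcal{W}}_n}(\varphi-\pi)\|_{L^2(\delta)},
\]
and Theorem~\ref{thm:L2-stability} combined with the $L^2$ Deny--Lions bound completes the estimate.

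The main technical point is the uniformity of all constants across $\delta$, especially for elements accumulating at the singular apex. This is secured by the self-similarity of Lemma~\ref{lem:hatW}~(C): up to a dyadic scaling $2^{-m}\f I$ with $0\leq m\leq n-n_0$, every $\tilde\delta$ coincides with a configuration already present at the coarse level $n_0$, so the Bramble--Hilbert, inverse-inequality, and polynomial-reproduction constants can be taken independent of $m$ and $n$ and depend only on $p$. This is the same rescaling mechanism used in the proof of Theorem~\ref{thm:L2-stability}, and with it the $h^{p+1-q}$ scaling follows from the homogeneous behavior of the $H^q$-seminorm under dilations.
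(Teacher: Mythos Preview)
Your argument is correct and follows essentially the same route as the paper: a local Bramble--Hilbert/Deny--Lions polynomial approximation on $\tilde\delta$, polynomial reproduction by $\Pi_{\widehat{\mathcal{W}}_n}$ via Lemma~\ref{lem:hatW}~(B), and then Theorem~\ref{thm:L2-stability} to control the projected remainder. The paper only writes out the case $q=0$ and asserts that general $q$ ``follows immediately''; your use of the inverse inequality on $\mathbb{Q}^p|_\delta$ together with the scaling/self-similarity argument is exactly the standard way to fill in that step, and your remarks on uniformity of the constants make explicit what the paper takes for granted.
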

\begin{proof}
We show this result only for $q=0$. The statement for general $q$ follows immediately. 
According to \cite{BrambleHilbert1970}, there exists a constant $C_A$, depending only on $p$, and a polynomial $\psi \in \PP^p$, such that 
\begin{equation*}
	\left\| \varphi - \psi \right\|_{L^2(\tilde\delta)} \leq C_A h^{p+1} |\varphi |_{\mathcal{H}^{p+1}(\tilde\delta)}.
\end{equation*}
The constant is uniformly bounded, due to the quasi-uniformity of the support extension $\tilde\delta$ of the element $\delta$.
Due to Lemma \ref{lem:hatW} (B) we have $\psi \in \widehat{\mathcal{W}}_n$, hence 
\begin{equation*}
	\left\| \varphi - \Pi_{\widehat{\mathcal{W}}_n} \varphi \right\|^2_{L^2(\delta)}
	\leq \left\| \varphi - \psi \right\|^2_{L^2(\delta)} + \left\| \Pi_{\widehat{\mathcal{W}}_n} ( \varphi - \psi ) \right\|^2_{L^2(\delta)}.
\end{equation*}
From Theorem \ref{thm:L2-stability} it follows that 
\begin{equation*}
	\left\| \Pi_{\widehat{\mathcal{W}}_n} ( \varphi - \psi ) \right\|^2_{L^2(\delta)} \leq C_S^2 \left\| \varphi - \psi \right\|^2_{L^2(\tilde\delta)}.
\end{equation*}
Hence, we conclude 
\begin{equation*}
	\left\| \varphi - \Pi_{\widehat{\mathcal{W}}_n} \varphi \right\|^2_{L^2(\delta)} \leq (1+C_S^2) \left\| \varphi - \psi \right\|^2_{L^2(\tilde\delta)} \leq (1+C_S^2) C_A^2  h^{2p+2} |\varphi |^2_{\mathcal{H}^{p+1}(\tilde\delta)},
\end{equation*}
which concludes the proof with $C = C_A \sqrt{1+C_S^2}$.
\end{proof}
We can now apply this theorem to prove bounds on the mapped physical domain $\Omega$.

\subsection{Approximation on $\Omega$}

This bound can be extended directly to some geometry mapping $\f G$ fulfilling additional regularity criteria.
\begin{theorem}
Let $n\geq \max(n_0,n^*)$, let $\f G = \f F \circ \f u$, where $\f F = \frac{1}{F_0}(F_1,F_2)^T$ with $F_0,F_1,F_2 \in \widehat{\mathcal{W}}_{n^*}$ and $\det\nabla\f F > \underline c > 0$, and let 
\begin{equation*}
	\Pi_{{\mathcal{V}}_n} (\varphi) = \frac{\Pi_{\widehat{\mathcal{W}}_n} ((\varphi \circ \f F).F_0)}{F_0}\circ \f F^{-1}.
\end{equation*}
Then there exists a constant $C>0$, depending only on the degree $p$ and on the geometry parameterization $\f G$, such that for all $\varphi \in {H}^{p+1}(\Omega)$, for all $q\leq p+1$, and for all $\omega = \f F(\delta)$, with $\delta \in \widehat{T}_n$, we have 
\begin{equation*}
	\left| \varphi - \Pi_{{\mathcal{V}}_n} \varphi \right|_{H^q(\omega)} \leq C h^{p-q+1} \|\varphi \|_{{H}^{p+1}(\tilde\omega)}.
\end{equation*}
\end{theorem}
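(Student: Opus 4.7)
The plan is to reduce the estimate on the physical element $\omega = \f F(\delta)$ to the one already established on $\delta$ in Theorem \ref{thm:approx-delta}, using the norm equivalence across $\f F$ provided by Assumption \ref{assu:equiv-norms}. To justify that equivalence in the present setting, I would first observe that $F_0,F_1,F_2 \in \widehat{\mathcal{W}}_{n^*} \subset \mathscr{C}^{p}(\overline{\Delta})$ by Lemma \ref{lem:hatW} (A). Since $F_0$ is a strictly positive weight and $\det\nabla\f F > \underline c > 0$, Proposition \ref{prop:equiv-norms} applies and yields the equivalence of $\|\cdot\|_{H^k(\omega)}$ and $\|\cdot \circ \f F\|_{H^k(\delta)}$ for all $0\leq k\leq p+1$, both on the element and on its support extension $\tilde\omega = \f F(\tilde\delta)$.

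The key algebraic step is to pull back the projection error. Setting $g = (\varphi\circ\f F)\, F_0$, the definition of $\Pi_{\mathcal{V}_n}$ gives
\[
(\varphi - \Pi_{\mathcal{V}_n}\varphi)\circ\f F \;=\; \frac{1}{F_0}\bigl( g - \Pi_{\widehat{\mathcal{W}}_n} g \bigr).
\]
By the norm equivalence just discussed, $|\varphi - \Pi_{\mathcal{V}_n}\varphi|_{H^q(\omega)}$ is then controlled, up to a constant depending on $\f F$, by $\|(g-\Pi_{\widehat{\mathcal{W}}_n} g)/F_0\|_{H^q(\delta)}$. Since $1/F_0$ belongs to $\mathscr{C}^p(\overline{\Delta})$ (as $F_0>0$ uniformly), multiplication by $1/F_0$ is a bounded operator on $H^q(\delta)$, and thus this quantity is in turn bounded by $C(\f F)\,\|g - \Pi_{\widehat{\mathcal{W}}_n} g\|_{H^q(\delta)}$.

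Next, I would apply Theorem \ref{thm:approx-delta} to $g$ (summing over derivative orders up to $q$) to obtain $\|g - \Pi_{\widehat{\mathcal{W}}_n} g\|_{H^q(\delta)} \leq C h^{p-q+1} |g|_{\mathcal{H}^{p+1}(\tilde\delta)}$. Expanding $g = (\varphi\circ\f F)\,F_0$ via the Leibniz rule and using the uniform $\mathscr{C}^p$-bounds on $F_0$, the seminorm $|g|_{\mathcal{H}^{p+1}(\tilde\delta)}$ is dominated by $C(\f F)\,\|\varphi\circ\f F\|_{H^{p+1}(\tilde\delta)}$. A final invocation of Assumption \ref{assu:equiv-norms} turns this into $C_F\,\|\varphi\|_{H^{p+1}(\tilde\omega)}$, and concatenating the three bounds yields the asserted rate $h^{p-q+1}$.

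The main obstacle is bookkeeping rather than a conceptual one: the constant absorbs factors coming from the product rule applied to $(\varphi\circ\f F)\,F_0$, from multiplication by $1/F_0$, and from two compositions with $\f F$. Each of these is harmless thanks to the $\mathscr{C}^p$-smoothness of $F_0,F_1,F_2$, the uniform positivity of $F_0$, and the lower bound $\det\nabla\f F > \underline c$, so no new mesh-dependent estimate is needed beyond Theorem \ref{thm:approx-delta} and Assumption \ref{assu:equiv-norms}.
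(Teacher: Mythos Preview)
Your proposal is correct and follows exactly the approach of the paper: the paper's own proof is a single sentence asserting that the result is a direct consequence of Theorem~\ref{thm:approx-delta} together with the norm equivalence of Assumption~\ref{assu:equiv-norms}, the latter being justified via Lemma~\ref{lem:hatW}~(A) and Proposition~\ref{prop:equiv-norms}. You simply make explicit the intermediate steps (the pullback identity $(\varphi-\Pi_{\mathcal{V}_n}\varphi)\circ\f F = (g-\Pi_{\widehat{\mathcal{W}}_n}g)/F_0$, the boundedness of multiplication by $1/F_0$, and the Leibniz expansion of $|g|_{\mathcal{H}^{p+1}}$) that the paper leaves to the reader.
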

\begin{proof}
The proof of this theorem is a direct consequence of Theorem \ref{thm:approx-delta} and the equivalence of norms as stated in Assumption \ref{assu:equiv-norms}. The assumption is fulfilled because of Lemma \ref{lem:hatW} (A) and Proposition \ref{prop:equiv-norms}.
\end{proof}
Note that the condition on the geometry mapping $\f G$ is a real restriction. A general NURBS mapping $\f G$ cannot be represented as a composition of a singular bilinear mapping $\f u$ with a regular mapping $\f F= \frac{1}{F_0}(F_1,F_2)^T$ which fulfills $F_0,F_1,F_2 \in \widehat{\mathcal{W}}_n$. Hence, for general NURBS parameterizations, the equivalence of norms in Assumption \ref{assu:equiv-norms} is not fulfilled. However, numerical evidence shows, that in many cases the equivalence of norms is not necessary for optimal order of approximation. This phenomenon may be studied in more detail. 

\section{Conclusion}
\label{sec:conclusion}

In this paper we could prove approximation error bounds for isogeometric function spaces over singularly parameterized domains, thus extending known results in isogeometric analysis to singular patches. We show these bounds for a hierarchically refined subspace of the full tensor product spline space. The refinement is considered to be uniform with $h$, which is not a necessary condition, as we pointed out in Section \ref{subsec:generalization-quasi-uniform}. We however restricted our study to a certain type of singular patches that are derived from regularly mapped triangular domains. In the framework we presented, this is necessary in order to guarantee the equivalence of norms between the triangle $\Delta$ and the mapped domain $\Omega$. It is not clear yet, how these results extend to more general configurations where this equivalence is not satisfied near the singularity.

\end{document}